\documentclass[12pt]{article}
\usepackage{amsmath,amssymb,amscd,latexsym,amsthm}

\textwidth=16cm
\topmargin=-12mm
\oddsidemargin=0mm
\evensidemargin=0mm
\textheight=24cm

\newtheorem{Theorem}{Theorem}[section]

\newtheorem{Proposition}[Theorem]{Proposition}
\newtheorem{Lemma}[Theorem]{Lemma}

\newtheorem{Remark}[Theorem]{Remark}

\begin{document}

\title{Backward uniqueness for general parabolic operators in the whole space}

\author{Jie Wu\thanks{\small Center for Applied Mathematics, Tianjin University, Tianjin 300072, PR China;
E-mail: jackwu@amss.ac.cn} \,and\, Liqun Zhang\thanks{\small Hua Loo-Keng Key Laboratory of Mathematics, Institute of Mathematics, AMSS; School of Mathematical Sciences, UCAS, Beijing 100190, PR China; E-mail: lqzhang@math.ac.cn}}

\date{October, 2017}

\maketitle

\begin{abstract}
We prove the backward uniqueness for general parabolic operators of second order in the whole space
under assumptions that the leading coefficients of the operator are Lipschitz
and their gradients satisfy certain decay conditions. This result
extends in some ways a classical result of Lions and Malgrange \cite{LM} and a recent result of the authors \cite{WZ1}.
\end{abstract}

{\small {\bf Keywords:} Carleman estimates; Unique continuation;
Backward uniqueness; Parabolic operator.}\\

{\small {\bf MSC (2010):} 35K10; 35A02; 35R45.}

\section{Introduction}

Let $\emph{P}$ be a backward parabolic operator on $\mathbb{R}^n\times[0,1]$,
$$
P=\partial_t+\partial_i\big(a^{ij}(x,t)\partial_j\big)=\partial_t+\nabla\cdot(\mathbf{A}\nabla),
$$
where $\mathbf{A}(x,t)=(a^{ij}(x,t))^n_{i,j=1}$ is a real symmetric matrix
such that for some $\Lambda\geq\lambda>0$,
\begin{equation}\label{a1}
\lambda|\xi|^2\leq a^{ij}(x,t)\xi_i\xi_j\leq
\Lambda|\xi|^2, ~~~\forall~ \xi\in\mathbb{R}^n.
\end{equation}
Here we work with backward parabolic operators because it is more convenient in this context.
A function $u$ satisfies that
$$
|Pu|\leq N(|u|+|\nabla u|)
$$
and the growth condition
\begin{equation*}\label{growth}
|u(x,t)|\leq Ne^{N|x|^\alpha}
\end{equation*}
for some $\alpha\in[0,2]$, or a weaker condition
\begin{equation*}\label{integ-con}
e^{-N|x|^\alpha}u(x,t)\in L^2(\mathbb{R}^n\times[0,1]).
\end{equation*}

The backward uniqueness problem is: suppose
$$
u(x,0)=0,~~~\forall~x\in\mathbb{R}^n,
$$
does $u$ vanish identically in $\mathbb{R}^n\times[0,1]$?

Here we set $\alpha\in[0,2]$ because the classical examples of Tychonoff \cite{Ty}
show that the backward uniqueness fails when $\alpha>2$.

The backward uniqueness problem has a natural background in the control
theory for PDEs. It also appeared in the regularity theory of parabolic equations,
for example, it was applied to prove the full regularity of $L_{3,\infty}$-solutions of the 3-dimensional Navier-Stokes equations \cite{ESS}.

When $P$ is with constant coefficients, i.e., the backward heat operator, there are already many
results in various domains, such as the exterior domain \cite{ESS1}, the half space \cite{ESS2} and some cones \cite{LS,WW,R¨¹}.

When $P$ is a general operator with variable coefficients,
the results for the Landis-Oleinik conjecture \cite{Ngu,WZ2},
backward uniqueness in the half space \cite{WZ1} and unique continuation \cite{EF}
imply that the backward uniqueness in the whole space is valid under the Lipschitz conditions
\begin{equation}\label{Nguyen1}
|\nabla_xa^{ij}(x,t)|+|\partial_ta^{ij}(x,t)|\leq M,
\end{equation}
and the decay at infinity conditions
\begin{equation}\label{opcond}
|\nabla_xa^{ij}(x,t)|\leq E|x|^{-1},~~~where~~E<E_0(n,\Lambda,\lambda).
\end{equation}
Moreover, both conditions (\ref{Nguyen1}) and (\ref{opcond}) are almost optimal
for the backward uniqueness in the half space when the growth rate of $u$ is quadratic exponential (i.e. $\alpha=2$),
which could be seen from the counter examples constructed by the authors in \cite{WZ1}.

On the other hand, the classical result of Lions and Malgrange \cite{LM} showed that the backward uniqueness is valid
if $u$ lies in the space
$$\mathcal{H}:=H^1\big((0,1),L^2(\mathbb{R}^n_x)\big)\cap L^2\big((0,1),H^2(\mathbb{R}^n_x)\big)$$
and $$a^{ij}(x,t)\in Lip\big([0,1],L^\infty(\mathbb{R}^n_x)\big).$$

In this paper we will prove a result which extends the above two results in some ways. We observe that there is a link between
the decay rate of $|\nabla_xa^{ij}(x,t)|$ and the exponential growth rate of $u$. We denote
$$\langle x\rangle=\sqrt{1+|x|^2},~~~ \beta=\max{\{0,\alpha-1\}}.$$ Our main result is the following.
\begin{Theorem}\label{mainthm}
Suppose $\{a^{ij}\}$ satisfy (\ref{a1}), and for some constants $E,M,N>0$,
\begin{equation}\label{a2}
|\nabla_xa^{ij}(x,t)|+|\partial_ta^{ij}(x,t)|\leq M,~~~~~|\nabla_xa^{ij}(x,t)|\leq E\langle x\rangle^{-\beta}
\end{equation}
in $\mathbb{R}^n\times[0,1]$. Assume that $u$ satisfies
\begin{equation}\label{u-cond1}
|Pu|\leq N(|u|+|\nabla u|)
\end{equation}
and
\begin{equation}\label{u-cond2}
|u(x,t)|\leq Ne^{N|x|^\alpha}~~~or~~~e^{-N|x|^\alpha}u(x,t)\in L^2(\mathbb{R}^n\times[0,1]).
\end{equation}
Then if $u(x,0)=0$ in $\mathbb{R}^n$, $u$ vanishes identically in $\mathbb{R}^n\times[0,1]$.
\end{Theorem}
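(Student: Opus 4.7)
My plan is to prove Theorem \ref{mainthm} by establishing a global Carleman estimate with a weight $\Phi_\tau(x,t)$ carefully tailored so that the decay rate of $|\nabla_x a^{ij}|$ exactly balances the exponential growth rate of $u$. Since the Lions--Malgrange proof (which relies on log-convexity of $\|u(\cdot,t)\|_{L^2}$) breaks down the moment $u$ is allowed to be exponentially large, a direct weighted $L^2$ argument is the natural replacement. The guiding observation is that the relation $\beta=\max\{0,\alpha-1\}$ is precisely the differentiated version of the growth $|x|^\alpha$: if $\Phi_\tau(x,t)\sim \tau g(t)\langle x\rangle^\alpha$, then $|\nabla_x\Phi_\tau|\sim \tau\langle x\rangle^{\alpha-1}$, so that commutator terms of the type $(\nabla_x a^{ij})\cdot(\nabla_x\Phi_\tau)$ remain bounded. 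This matching is what makes a positive-commutator Carleman argument go through for coefficients with only mild gradient decay.

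\textbf{Key steps.} First, I would reduce the problem to a small time slab $[0,\delta]$ by temporal cutoff $\eta(t)$ and, simultaneously, spatially truncate with $\chi_R(x)$ supported in $|x|<2R$, setting $w=\eta(t)\chi_R(x)u(x,t)$. Then $w$ vanishes at $t=0$, is compactly supported in $x$, and satisfies
$$Pw=\eta\chi_R\,Pu+\eta[\,\nabla\cdot(\mathbf{A}\nabla),\chi_R\,]u+\eta'\chi_R\,u.$$
Second, I would construct a weight of the form
$$\Phi_\tau(x,t)=\tau\big(g(t)\,\varphi_\alpha(x)+h(t)\big),\qquad \varphi_\alpha(x)\sim \langle x\rangle^\alpha,$$
with $g$ strictly decreasing and $h$ suitably convex, so that the pseudo-conjugated operator $e^{\tau\Phi_\tau}P e^{-\tau\Phi_\tau}$ splits into a symmetric and an antisymmetric part whose commutator is positive definite once $\tau$ is large. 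Here the hypothesis $|\nabla_x a^{ij}|\leq E\langle x\rangle^{-\beta}$ (with $E$ small when necessary) is used to dominate the bad terms arising from the variable coefficients in that commutator computation. The resulting Carleman inequality, schematically
$$\tau\!\iint e^{2\Phi_\tau}|w|^2\,dx\,dt+\iint e^{2\Phi_\tau}|\nabla w|^2\,dx\,dt\leq C\!\iint e^{2\Phi_\tau}|Pw|^2\,dx\,dt,$$
absorbs the $N(|w|+|\nabla w|)$ term from $|Pu|\leq N(|u|+|\nabla u|)$ for $\tau$ large.

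\textbf{Concluding and iterating.} The commutator term $[\nabla\cdot(\mathbf{A}\nabla),\chi_R]u$ is supported in $R<|x|<2R$; using the growth bound $|u|\leq Ne^{N|x|^\alpha}$ (or the weaker integral version) against the stronger growth of $\Phi_\tau$ on that annulus lets this contribution tend to $0$ as $R\to\infty$ for each fixed $\tau$. The cutoff-in-time error $\eta'\chi_R u$ is supported near $t=\delta$, where $\Phi_\tau$ is smallest, so it produces a term of order $e^{-c\tau}$ against the dominant term $e^{2\tau\Phi_\tau(x,0^+)}|u|^2$ on $[0,\delta/2]$. Sending $\tau\to\infty$ forces $u\equiv 0$ on $\mathbb{R}^n\times[0,\delta/2]$, and since $\delta$ depends only on $n,\lambda,\Lambda,E,M,N$, one iterates in time to cover $[0,1]$.

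\textbf{Main obstacle.} The technical heart is the construction of the weight $\Phi_\tau$ and the verification of the Carleman estimate. The weight must simultaneously (i) decrease in $t$ so that the hypothesis $u(\cdot,0)=0$ kills the initial-time boundary term, (ii) grow in $x$ at least like $c|x|^\alpha$ with $c$ arbitrarily large (so that the $|x|^\alpha$-growth of $u$ is beaten on the annulus $R<|x|<2R$), and (iii) produce a positive commutator when conjugated with $P$, despite the fact that the leading coefficients are merely Lipschitz with decay exponent exactly $\beta=\alpha-1$. The interplay between (ii) and (iii) is delicate: raising $\tau$ to achieve (ii) also enlarges the error terms in the commutator coming from $\nabla_x a^{ij}$, and it is precisely the matching $\beta=\alpha-1$ that keeps those errors under control. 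Once this algebraic balance is carried out, the cutoff/limiting arguments are standard.
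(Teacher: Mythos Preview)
Your overall strategy---a Carleman estimate with weight behaving like $e^{-c\langle x\rangle^\alpha/t}$, spatial and temporal cutoffs, and the observation that the exponent matching $\beta=\alpha-1$ tames the commutator terms coming from $\nabla_x a^{ij}$---is exactly the route the paper takes. Two points, however, deserve correction.

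\medskip
\textbf{The weight must decay in $x$, and the spatial coefficient cannot simply be taken large.} You describe $\Phi_\tau$ as ``growing in $x$ at least like $c|x|^\alpha$ with $c$ arbitrarily large''; for the Carleman integrals to make sense against $|u|\le Ne^{N|x|^\alpha}$ the exponent must be \emph{negative} in $x$, i.e.\ $\Phi_\tau\sim -c\langle x\rangle^\alpha$. More importantly, the paper does \emph{not} obtain $c>N$ by pushing a large parameter onto the spatial part of the weight. Its Carleman weight is
\[
G=\exp\Bigl(2\gamma(t^{-K}-1)-\tfrac{b\langle x\rangle^\alpha+K}{t}\Bigr),
\]
where $b=\tfrac{1}{8\Lambda}$ is \emph{fixed} and only the purely temporal parameter $\gamma$ is sent to infinity. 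The growth constant $N$ is instead neutralised by a parabolic rescaling $\tilde u(x,t)=u(\tau x,\tau^2(t-\tfrac12))$ with $\tau\le\min\{1,\tfrac{1}{2N},\tfrac{b}{8N}\}$, which shrinks the effective growth to $e^{(b/4)\langle x\rangle^\alpha}$ while \emph{preserving} the hypothesis $|\nabla_x a^{ij}|\le E\langle x\rangle^{-\beta}$ (here one uses $1-\beta\ge 0$). After rescaling, one first sends $R\to\infty$ with $\gamma$ fixed, then $\gamma\to\infty$.

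\medskip
\textbf{Why this matters: no smallness of $E$.} Your parenthetical ``(with $E$ small when necessary)'' is precisely what the theorem avoids---indeed this is singled out in the paper's remark as the improvement over the half-space result. If you place the large parameter $\tau$ on the spatial factor $g(t)\langle x\rangle^\alpha$, the bad commutator term $\partial_l(\log G)\,a^{ki}\partial_k a^{lj}$ has size $\sim \tau E\langle x\rangle^{\alpha-1-\beta}\le \tau E$, and you must produce a competing positive term of the same order in $\tau$; whether your scheme does this without a constraint on $E$ is exactly the point you leave unresolved. The paper sidesteps the issue entirely: with $b$ fixed, the commutator error is a constant (depending on $E$) over $t$, and it is absorbed by choosing the auxiliary constant $K=K(n,\Lambda,\lambda,M,E,\alpha)$ large in the weight, while the free parameter $\gamma$ lives only in the $x$-independent factor and never touches the spatial commutator. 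So your proposal is on the right track, but to match the stated theorem you should replace ``large $\tau$ on the spatial weight'' by ``parabolic rescaling of $u$ plus a separate large temporal parameter,'' and drop any smallness assumption on $E$.
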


\begin{Remark}\mbox{}
\begin{enumerate}

  \item When $\alpha\in[0,1]$, Theorem \ref{mainthm} tells us that the Lipschitz conditions (\ref{Nguyen1}) are sufficient for the backward uniqueness even if $|u(x,t)|\leq Ne^{N|x|}$ or $e^{-N|x|}u(x,t)\in L^2(\mathbb{R}^n\times[0,1]).$
This extends the result of Lions and Malgrange \cite{LM} in some ways.
  \item When $\alpha=2$, it required the smallness of $E$ for the backward uniqueness in the half space \cite{WZ1}.
However as for the whole space, we don't require such condition.
\end{enumerate}
\end{Remark}

To prove our result we need the following Carleman inequality.
\begin{Proposition}\label{Prop-C1}
Suppose $\{a^{ij}\}$ satisfy (\ref{a1}) and (\ref{a2}).
For any $v\in C^\infty_0(\mathbb{R}^n\times(0,1))$ and any $\gamma>0$, we have
\begin{equation}\label{CI1}
\begin{split}
\int_{\mathbb{R}^n\times (0,1)}e^{2\gamma(t^{-K}-1)-\frac{b\langle x\rangle^\alpha+K}{t}}(|v|^2+|\nabla v|^2)dxdt\\
\leq \int_{\mathbb{R}^n\times (0,1)}e^{2\gamma(t^{-K}-1)-\frac{b\langle x\rangle^\alpha+K}{t}}|Pv|^2dxdt,
\end{split}
\end{equation}
where $b=\frac{1}{8\Lambda}$ and $K=K(n,\Lambda,\lambda,M,E,\alpha)$.
\end{Proposition}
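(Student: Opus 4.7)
My plan is the standard Carleman approach via a symmetric/antisymmetric splitting of the conjugated operator. Set
$$\phi(x,t) = \gamma(t^{-K}-1) - \frac{b\langle x\rangle^\alpha+K}{2t}, \qquad w = e^\phi v,$$
so that $e^\phi Pv = Lw$ with $L := e^\phi P e^{-\phi}$. Since $\nabla v = e^{-\phi}(\nabla w - w\nabla\phi)$, the left-hand side of (\ref{CI1}) is controlled, modulo the correction $|\nabla\phi|^2|w|^2$, by $\int (|w|^2 + |\nabla w|^2)\,dx\,dt$. It thus suffices to prove a bound of the form
$$\int \bigl(|w|^2 + |\nabla w|^2 + |\nabla\phi|^2|w|^2\bigr)\,dx\,dt \le C \int |Lw|^2\,dx\,dt.$$

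\textbf{Symmetric/antisymmetric decomposition.} A short direct computation gives $L = S + A$ with
\begin{align*}
Sw &= \partial_i(a^{ij}\partial_j w) + \big[a^{ij}\partial_i\phi\,\partial_j\phi - \partial_t\phi\big]\,w,\\
Aw &= \partial_t w - 2a^{ij}\partial_i\phi\,\partial_j w - \partial_j(a^{ij}\partial_i\phi)\,w,
\end{align*}
where $S$ is self-adjoint and $A$ antisymmetric on $L^2(\mathbb{R}^n\times(0,1))$. From the identity $\|Lw\|^2 = \|Sw\|^2 + \|Aw\|^2 + \langle [S,A]w, w\rangle \ge \langle [S,A]w, w\rangle$, the desired estimate reduces to a lower bound on the commutator.

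\textbf{Commutator lower bound.} Expand $[S,A]$ as a sum of elementary commutators of $D := \partial_i(a^{ij}\partial_j\cdot)$, the zeroth-order multiplier $c := a^{ij}\partial_i\phi\partial_j\phi - \partial_t\phi$, and the first-order skew piece $B := -2a^{ij}\partial_i\phi\,\partial_j - \partial_j(a^{ij}\partial_i\phi)$. Three sources of coercivity appear: (i) the term $\partial_t^2\phi = K(K+1)\gamma t^{-K-2} - (b\langle x\rangle^\alpha+K)/t^3$ from $-\partial_t c$, supplying the large coefficient $\gtrsim \gamma K^2 t^{-K-2}$ on $|w|^2$; (ii) the nonnegative contribution $-\partial_t(a^{ij}\partial_i\phi\partial_j\phi)\sim (b^2\alpha^2/t^3)\langle x\rangle^{2\alpha-4}a^{ij}x_ix_j$, of order $\langle x\rangle^{2\alpha-2}/t^3$, which in particular dominates the leftover $|\nabla\phi|^2|w|^2$ correction; and (iii) the second-order principal part of $[D,B]$, which symmetrizes to $\int 4a^{ij}a^{kl}(-\partial_i\partial_k\phi)\,\partial_j w\,\partial_l w$, where
$$-\partial_i\partial_k\phi = \frac{b\alpha}{2t}\big[\langle x\rangle^{\alpha-2}\delta_{ik} + (\alpha-2)\langle x\rangle^{\alpha-4}x_ix_k\big]$$
is a positive semi-definite matrix for $\alpha\in[1,2]$ by a direct eigenvalue computation, providing the $|\nabla w|^2$ coefficient. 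In the remaining range $\alpha\in[0,1)$ the $|\nabla w|^2$ piece is extracted by combining $\|Sw\|^2$ with a standard ellipticity/interpolation argument.

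\textbf{Error absorption and main obstacle.} The remaining terms in the expansion arise from derivatives falling on the coefficients $a^{ij}$ (bounded by $M$ or $E\langle x\rangle^{-\beta}$) multiplied by powers of $\nabla\phi$ and $\nabla^2\phi$, which satisfy $|\nabla\phi|\lesssim \langle x\rangle^{\alpha-1}/t$ and $|\nabla^2\phi|\lesssim \langle x\rangle^{\alpha-2}/t$. After Cauchy--Schwarz, each such error is dominated pointwise by $C(M+E\langle x\rangle^{\alpha-1-\beta})/t$ times $|w|^2 + |\nabla w|^2$. The defining relation $\beta = \max\{0,\alpha-1\}$ forces the exponent $\alpha-1-\beta$ to be non-positive, so every error is uniformly bounded by $C(M,E)/t \cdot (|w|^2 + |\nabla w|^2)$, and is absorbed into the coercive terms identified above by choosing $K = K(n,\Lambda,\lambda,M,E,\alpha)$ large enough, independently of $\gamma$. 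The principal technical obstacle is precisely this bookkeeping: organizing the commutator expansion cleanly and verifying the exponent balance $\alpha-1-\beta\le 0$ term by term. It is this balance --- together with the availability of an arbitrarily large $K$ --- that permits absorption without any smallness assumption on $E$ and explains why the same estimate covers the full range $\alpha\in[0,2]$, improving in the whole-space setting over the corresponding half-space result of \cite{WZ1} where boundary contributions from integration by parts block the analogous absorption.
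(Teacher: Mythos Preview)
Your outline has two genuine gaps that the paper's proof is specifically designed to close.

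\textbf{Gradient coercivity.} You assert that the second-order part of $[D,B]$ symmetrizes to $\int 4a^{ij}a^{kl}(-\partial_i\partial_k\phi)\,\partial_jw\,\partial_lw$. The sign is in fact the opposite: a direct computation (or comparison with the Hessian piece $a^{ik}\partial_{kl}(\log G)a^{lj}$ of the matrix $\mathbf{D}_G$ in the paper) gives $+\partial_i\partial_k\phi$, and since the spatial part of $\phi$ is $-\tfrac{b}{2t}\langle x\rangle^\alpha$ (concave in $x$), this contribution to the gradient form is \emph{nonpositive}. Even ignoring the sign, the Hessian carries only the fixed factor $b=\tfrac{1}{8\Lambda}$ and, for $\alpha<2$, a decaying weight $\langle x\rangle^{\alpha-2}$; it contains no large parameter and cannot absorb errors of size $C(M,E)/t$ ``by choosing $K$ large''. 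In the paper the needed $\tfrac{\lambda K}{8t}|\nabla v|^2$ does not come from the Hessian at all: the Escauriaza--Fern\'andez identity (Lemma~\ref{cor-generalCI1}) carries a \emph{free} scalar $F$, and choosing $F$ to differ from $(\partial_tG-\tilde\Delta G)/G$ by a term $d/t$ with $d=K/4$ inserts $\tfrac{d}{t}\mathbf A$ into the gradient matrix, which is exactly what yields (\ref{E1}). Your canonical symmetric/antisymmetric splitting has no such degree of freedom.

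\textbf{Regularity of the coefficients.} Expanding $\langle[S,A]w,w\rangle$ forces the zeroth-order part $-\partial_j(a^{ij}\partial_i\phi)$ of $A$ to be differentiated by the second-order operator $D$, producing second spatial derivatives of the merely Lipschitz $a^{ij}$. These are not assumed to exist. The paper circumvents this by introducing a \emph{second} auxiliary function $F_0$ in which the coefficients are replaced by mollifications $a^{ij}_\epsilon$ (Lemma~\ref{Lem-mollify}); then $\tilde\Delta F_0$ is well defined and bounded (estimate (\ref{E3})), while the discrepancy $\nabla(F-F_0)$ is controlled through the decay hypothesis $|\nabla a^{ij}|\le E\langle x\rangle^{-\beta}$ (estimate (\ref{E4})). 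This mollification step is essential and is missing from your sketch.
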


It is worthwhile to mention \cite{SP,SJP} and related results, which discuss the backward uniqueness problem when $u\in\mathcal{H}$ and
$a^{ij}(x,t)$ are non-Lipschitz. However, here we just assume that u satisfies (\ref{u-cond2}).

The paper is organized as follows. First we use Carleman inequality (\ref{CI1})
to prove Theorem \ref{mainthm}, then we prove this Carleman inequality.

\section{Proof of the main result}

In this section, we prove Theorem \ref{mainthm}. First, we extend $u$ and $a^{ij}$ by the following way:
\begin{equation*}
\begin{aligned}
u(x,t)=&0,~~~~~&&if~~t<0;\\
a^{ij}(x,t)=&a^{ij}(x,0),~~~&&if~~t<0.
\end{aligned}
\end{equation*}
The next lemma implies Theorem \ref{mainthm} immediately.
\begin{Lemma}
Suppose $\{a^{ij}\}$ and $u$ are the same as those in Theorem \ref{mainthm}. Then there exists
$T_1=T_1(\Lambda,N)>0$, such that
$u(x,t)\equiv0$
in $\mathbb{R}^n\times(0,T_1)$.
\end{Lemma}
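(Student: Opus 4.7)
The plan is to apply Proposition \ref{Prop-C1} to a space-time truncation of $u$, let the truncation parameters expand, and use the resulting inequality to conclude $u \equiv 0$ on $\mathbb{R}^n \times (0, T_1)$ for a sufficiently small $T_1$ depending only on $\Lambda$ and $N$.

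First I would regularize $u$ in the $t$-variable by standard mollification (using the extension $u \equiv 0$ for $t < 0$) to get smooth approximations $u_\epsilon$ for which the test function below lies in $C_c^\infty$ and the commutator with $P$ is controllable. Next I pick a radial spatial cutoff $\phi_R \in C_c^\infty(\mathbb{R}^n)$ with $\phi_R \equiv 1$ on $|x|\leq R$, supported in $|x|\leq 2R$, and satisfying $|\nabla \phi_R|\leq C/R$ and $|D^2\phi_R|\leq C/R^2$; together with a temporal cutoff $\psi_\delta \in C_c^\infty(\mathbb{R})$ that equals $1$ on $[2\delta, T_1]$ and vanishes outside $(\delta, 2T_1)$. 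Setting $v := \phi_R \psi_\delta u_\epsilon$, expanding
$$Pv = \phi_R \psi_\delta Pu_\epsilon + \phi_R \psi_\delta' u_\epsilon + 2\psi_\delta\,\mathbf{A}\nabla\phi_R\cdot\nabla u_\epsilon + \psi_\delta u_\epsilon\,\nabla\cdot(\mathbf{A}\nabla\phi_R),$$
and substituting into (\ref{CI1}) splits the right-hand side into four contributions.

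The $\nabla\phi_R$ and $D^2\phi_R$ contributions are supported on the annulus $\{R \leq |x| \leq 2R\}$; combining the growth bound $|u|\leq N e^{N|x|^\alpha}$ with the Gaussian-type decay $e^{-b\langle x\rangle^\alpha/t}$ of the weight, they tend to $0$ as $R \to \infty$ provided $T_1$ is chosen so that $b/T_1 > 2N$, i.e.\ $T_1 < 1/(16\Lambda N)$. The lower piece of the $\psi_\delta'$-error, supported near $t = \delta$, vanishes as $\delta \to 0$ by using $u|_{t=0}=0$ (handled via a Steklov average or the continuity in $t$ inherited from the smoothing). The upper piece of the $\psi_\delta'$-error, supported on $[T_1, 2T_1]$, is absorbed by sending $\gamma \to \infty$: the factor $e^{2\gamma(t^{-K}-1)}$ is exponentially larger on any subinterval of $(0, T_1)$ than on $[T_1, 2T_1]$, so in the ratio of the left-hand side to this error the latter becomes negligible.

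The remaining principal term $\phi_R \psi_\delta Pu_\epsilon$, which is bounded by $N(|v|+|\nabla v|)$ plus a $\nabla\phi_R$-error, has to be absorbed back into the left-hand side. This is the key obstacle: the estimate (\ref{CI1}) carries no explicit large constant on the left, so the absorption must be extracted from the quantitative structure of the weight on the short time interval $(0, 2T_1)$ together with the smallness of $T_1$ in terms of $N$, which is precisely what fixes the dependence $T_1 = T_1(\Lambda, N)$ in the conclusion. Once this absorption is carried out, passing to the limits $\epsilon \to 0$, $R \to \infty$, $\delta \to 0$, $\gamma \to \infty$ bounds the left-hand side of (\ref{CI1}) restricted to $\mathbb{R}^n \times (0, T_1)$ above by zero, and since the weight there is strictly positive this forces $u \equiv 0$ on $\mathbb{R}^n \times (0, T_1)$, which is the desired conclusion.
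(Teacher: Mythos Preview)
Your approach has two genuine gaps, both of which the paper resolves by a single device you omit: the parabolic rescaling $\tilde u(x,t)=u\big(\tau x,\tau^2(t-\tfrac12)\big)$ with $\tau=\min\{1,\tfrac{1}{2N},\tfrac{b}{8N}\}$.

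First, the absorption you flag as ``the key obstacle'' cannot be carried out as you describe. Inequality (\ref{CI1}) carries the \emph{same} weight on both sides and no large constant on the left; restricting $t$ to $(0,2T_1)$ does not manufacture one, because nothing in the stated inequality distinguishes small $t$ from large $t$ beyond the common weight. The paper instead rescales so that $|\tilde P\tilde u|\le\tau N(|\tilde u|+|\nabla\tilde u|)\le\tfrac12(|\tilde u|+|\nabla\tilde u|)$; after squaring, the principal term contributes at most $\tfrac34 J$ and is absorbed trivially. This same choice of $\tau$ simultaneously turns the growth bound into $|\tilde u|+|\nabla\tilde u|\le Ce^{\frac{b}{4}\langle x\rangle^\alpha}$, so the spatial annulus error is beaten by the weight for \emph{all} $t\in(0,1)$, not merely for $t<T_1$.

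Second, and fatally, your lower temporal cutoff near $t=\delta$ cannot be sent to zero. The weight $e^{2\gamma(t^{-K}-1)-(b\langle x\rangle^\alpha+K)/t}$ \emph{blows up} as $t\to0^+$ (for any $\gamma>0$ the term $2\gamma t^{-K}$ dominates $K/t$ since $K>1$), so the $\psi_\delta'$-error on $[\delta,2\delta]$ is multiplied by a factor tending to $+\infty$, and no rate of vanishing of $u$ at $t=0$ can compensate. The paper sidesteps this entirely: the time shift by $\tfrac12$ in the rescaling, together with the extension $u\equiv0$ for $t<0$, makes $\tilde u\equiv0$ on $\mathbb{R}^n\times(0,\tfrac12]$, so $v=\eta\tilde u$ is already compactly supported away from $t=0$ and only an \emph{upper} cutoff near $t=\tfrac78$ is needed. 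That upper error is then killed by sending first $R\to\infty$ and then $\gamma\to\infty$, exactly as you describe.
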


\begin{proof}
We use Carleman inequality (\ref{CI1}) to prove
this lemma, mainly following the arguments of the corresponding parts in \cite{ESS1} and \cite{WZ1}.
We just give the proof for the case that $|u(x,t)|\leq Ne^{N|x|^\alpha}$, since the proof of the other case is similar.

Without loss of generality, we assume that $\alpha \in [1,2]$.

\textbf{Step 1.} By the regularity theory for solutions
of parabolic equations, we have
\begin{equation}\label{pt1}
|u(x,t)|+|\nabla u(x,t)|\leq C(n,\Lambda,\lambda,M,N)e^{2N|x|^{\alpha}}
\end{equation}
when $(x,t)\in \mathbb{R}^n\times(0,\frac{1}{2})$.
In the following, we always denote $C=C(n,\Lambda,\lambda,M,N)$.
Let
\begin{equation}\label{choice-tau}
\tau=\min\{1,\frac{1}{2N},\frac{b}{8N}\}.
\end{equation}
We denote
$$\tilde u(x,t)=u\big(\tau x,\tau^2(t-\frac{1}{2})\big)$$
and
$$\tilde{a}^{ij}(x,t)={a}^{ij}\big(\tau x,\tau^2(t-\frac{1}{2})\big)$$
for $(x,t)\in\mathbb{R}^n\times(0,1)$. Then it is easy to see that
$$|\nabla_x\tilde{a}^{ij}(x,t)|+|\partial_t\tilde{a}^{ij}(x,t)|\leq \tau M\leq M,$$
and
$$|\nabla_x\tilde{a}^{ij}(x,t)|=\tau|\nabla_x a^{ij}\big(\tau x, \tau^2(t-\frac{1}{2})\big)|
\leq \tau E\langle \tau x\rangle^{-\beta}\leq E\tau^{1-\beta}\langle x\rangle^{-\beta}\leq E\langle x\rangle^{-\beta}.$$
We denote
$$\tilde{P}=\partial_t+\partial_i(\tilde{a}^{ij}\partial_j),$$
then by (\ref{u-cond1}) and (\ref{choice-tau}) we have
\begin{equation}\label{pt2}
|\tilde{P}\tilde u|\leq\tau N(|\tilde u|+|\nabla \tilde u|)\leq\frac{1}{2}(|\tilde u|+|\nabla \tilde u|).
\end{equation}
By (\ref{pt1}) and (\ref{choice-tau}) we have
\begin{equation}\label{pt3}
|\tilde u(x,t)|+|\nabla \tilde u(x,t)|\leq Ce^{2N\tau^\alpha|x|^\alpha}\leq Ce^{2N\tau |x|^\alpha}\leq Ce^{\frac{b}{4}\langle x\rangle^\alpha}
\end{equation}
when $(x,t)\in \mathbb{R}^n\times(0,1)$, and
\begin{equation}\label{pt4}
\tilde u(x,t)=0
\end{equation}
when $(x,t)\in \mathbb{R}^n\times(0,\frac{1}{2}]$.

\textbf{Step 2.} In order to apply Carleman inequality (\ref{CI1}), we choose two
smooth cut-off functions such that
$$
\eta_1(|x|)=\left\{
                 \begin{array}{ll}
                   1, & \hbox{if $|x|<R$;} \\
                   0, & \hbox{if $|x|>R+1$,}
                 \end{array}
               \right.
$$
where $R$ is large enough, and
$$
\eta_2(t)=\left\{
            \begin{array}{ll}
              1, & \hbox{if $t<\frac{3}{4}$;} \\
              0, & \hbox{if $t>\frac{7}{8}$.}
            \end{array}
          \right.
$$
Let $\eta=\eta_1\eta_2$ and $v=\eta \tilde u$. Then $supp~v\subset \mathbb{R}^n\times(0,1)$. By (\ref{pt2}) we have
\begin{equation}\label{pt5}
\begin{split}
|\tilde{P}v|=&|\eta \tilde{P}\tilde u+\tilde u\tilde{P}\eta+2\tilde{a}^{ij}\partial_i\eta\partial_j \tilde u|\\
\leq&\frac{1}{2}\eta(|\tilde u|+|\nabla\tilde u|)+C(|\tilde u|+|\nabla \tilde u|)(|\partial_t\eta|+|\nabla\eta|+|\nabla^2\eta|)\\
\leq&\frac{1}{2}(|v|+|\nabla v|)+C\chi_\Omega(|\tilde u|+|\nabla \tilde u|),
\end{split}
\end{equation}
where $\chi$ is the characteristic function and
$$\Omega=\{0<\eta<1,~\frac{1}{2}<t<1\}.$$
Moreover,
\begin{eqnarray*}
\begin{split}
\Omega=&\{0<\eta_1<1,~\eta_2>0,~\frac{1}{2}<t<1\}\cup\{~\eta_1=1,~0<\eta_2<1,~\frac{1}{2}<t<1\}\\
=&\{R<|x|<R+1,~\frac{1}{2}<t<\frac{7}{8})\}\cup\{|x|<R,~\frac{3}{4}<t<\frac{7}{8}\}.
\end{split}
\end{eqnarray*}

\textbf{Step 3.} We apply Carleman inequality (\ref{CI1}) for $\tilde{P}$ and $v$, then
\begin{eqnarray*}
\begin{split}
J&\equiv\int_{\mathbb{R}^n\times (0,1)}e^{2\gamma(t^{-K}-1)-\frac{b\langle x\rangle^\alpha+K}{t}}(|v|^2+|\nabla v|^2)dxdt\\
&\leq \int_{\mathbb{R}^n\times (0,1)}e^{2\gamma(t^{-K}-1)-\frac{b\langle x\rangle^\alpha+K}{t}}|\tilde Pv|^2dxdt.\\
\end{split}
\end{eqnarray*}
By (\ref{pt5}) we have
$$J\leq \frac{3}{4}J+C\int_\Omega e^{2\gamma(t^{-K}-1)-\frac{b\langle x\rangle^\alpha+K}{t}}(|\tilde u|+|\nabla \tilde u|)^2dxdt,$$
thus
$$
J\leq C\int_\Omega e^{2\gamma(t^{-K}-1)-\frac{b\langle x\rangle^\alpha+K}{t}}(|\tilde u|+|\nabla \tilde u|)^2dxdt.
$$
By (\ref{pt3}) we obtain
\begin{equation}\label{est-ini}
\begin{split}
J\leq&C\int_\Omega e^{2\gamma(t^{-K}-1)-\frac{b}{2}\langle x\rangle^\alpha}dxdt\\
=&C\Big(\int_{\{R<|x|<R+1,~\frac{1}{2}<t<\frac{7}{8}\}}
+\int_{\{|x|<R,~\frac{3}{4}<t<\frac{7}{8}\}}\Big)e^{2\gamma(t^{-K}-1)-\frac{b}{2}\langle x\rangle^\alpha}dxdt\\
\equiv&J_1+J_2.
\end{split}
\end{equation}

\textbf{Step 4.} Now we estimate both sides of the above inequality.\\

\textbf{Estimate of $J_1$.}
\begin{equation}\label{est-J1}
\begin{split}
J_1\leq&Ce^{2\gamma(2^K-1)}\int_{\{R<|x|<R+1\}}e^{-\frac{b}{2}\langle x\rangle^\alpha}dx\\
\leq&Ce^{2^{K+1}\gamma-\frac{b}{4}R^\alpha}\int_{\{R<|x|<R+1\}}e^{-\frac{b}{4}\langle x\rangle^\alpha}dx\\
\leq&Ce^{2^{K+1}\gamma-\frac{b}{4}R^\alpha}.
\end{split}
\end{equation}

\textbf{Estimate of $J_2$.}
\begin{equation}\label{est-J2}
J_2\leq Ce^{2\gamma[(\frac{3}{4})^{-K}-1]}\int_{\{|x|<R\}}e^{-\frac{b}{2}\langle x\rangle^\alpha}dx
\leq Ce^{2\gamma[(\frac{3}{4})^{-K}-1]}.
\end{equation}

\textbf{Estimate of $J$.}
For an arbitrary $l\in(\frac{1}{2},\frac{3}{4})$,
\begin{equation}\label{est-J}
\begin{split}
J\geq&\int_{\{|x|<R,~\frac{1}{2}<t<l\}}e^{2\gamma(t^{-K}-1)-\frac{b\langle x\rangle^\alpha+K}{t}}(|\tilde u|^2+|\nabla \tilde u|^2)dxdt\\
\geq&e^{2\gamma(l^{-K}-1)}\int_{\{|x|<R,~\frac{1}{2}<t<l\}}e^{-\frac{b\langle x\rangle^\alpha+K}{t}}(|\tilde u|^2+|\nabla \tilde u|^2)dxdt.
\end{split}
\end{equation}
We combine (\ref{est-ini})-(\ref{est-J}), then we have
\begin{equation*}
\begin{split}
&\int_{\{|x|<R,~\frac{1}{2}<t<l\}}e^{-\frac{b\langle x\rangle^\alpha+K}{t}}(|\tilde u|^2+|\nabla \tilde u|^2)dxdt\\
\leq&Ce^{2\gamma(1-l^{-K})}\big(e^{2^{K+1}\gamma-\frac{b}{4}R^\alpha}+e^{2\gamma[(\frac{3}{4})^{-K}-1]}\big).
\end{split}
\end{equation*}
In the above inequality, we fix $\gamma$ and let $R\rightarrow\infty$, then we obtain
$$
\int_{\mathbb{R}^n\times(\frac{1}{2},l)}e^{-\frac{b\langle x\rangle^\alpha+K}{t}}(|\tilde u|^2+|\nabla \tilde u|^2)dxdt
\leq Ce^{2\gamma[(\frac{3}{4})^{-K}-l^{-K}]}.
$$
Now we fix $l$ and let $\gamma\rightarrow\infty$, then we have
$\tilde u(x,t)\equiv0$ in $\mathbb{R}^n\times(\frac{1}{2},l)$.\\
Since $l$ is an arbitrary number in $(\frac{1}{2},\frac{3}{4})$, then $\tilde u(x,t)\equiv0$ in $\mathbb{R}^n\times(\frac{1}{2},\frac{3}{4})$.
That is, $u(x,t)\equiv 0$ in $\mathbb{R}^n\times(0,\frac{\tau^2}{4})$.\\
Finally we let
$$T_1=\frac{\tau^2}{4}=\min\{\frac{1}{4},\frac{1}{16N^2},\frac{b^2}{256N^2}\},$$
then $T_1=T_1(\Lambda, N)$ and $u(x,t)\equiv 0$ in $\mathbb{R}^n\times(0,T_1)$.

Thus we proved this lemma.
\end{proof}

\section{Proof of  the Carleman inequality}

In this section, we prove Carleman inequality (\ref{CI1}). We need two lemmas  in our proof. The first one is due to Escauriaza and Fern\'{a}ndez \cite{EF} (see also
\cite[Corollary 3.2]{WZ2}).
In the following, we denote
$$\tilde{\Delta}=\partial_i(a^{ij}\partial_j).$$

\begin{Lemma}\label{cor-generalCI1}
Suppose $F$ is differentiable, $F_0$ and $G$ are twice differentiable and $G>0$. Then the
following identity holds for any $v\in C^\infty_0(\mathbb{R}^n\times(0,T))$:
\begin{equation}\label{generalCI1}
\begin{split}
&\frac{1}{2}\int_{\mathbb{R}^n\times(0,T)}v^2M_0Gdxdt+
\int_{\mathbb{R}^n\times(0,T)}\langle [2\mathbf{D}_G+(\frac{\partial_tG-
\tilde{\Delta}G}{G}-F)\mathbf{A}]\nabla v,\nabla v\rangle Gdxdt\\
&-\int_{\mathbb{R}^n\times(0,T)}v\langle \mathbf{A}\nabla v,\nabla (F-F_0)\rangle Gdxdt=
2\int_{\mathbb{R}^n\times(0,T)}Lv(Pv-Lv)Gdxdt,
\end{split}
\end{equation}
where
$$Lv=\partial_tv-\langle \mathbf{A}\nabla v,\nabla{\log G}\rangle+\frac{Fv}{2},$$
$$M_0=\partial_tF+F(\frac{\partial_tG-\tilde{\Delta}G}{G}-F)+\tilde{\Delta}F_0-
\langle \mathbf{A}\nabla (F-F_0),\nabla{\log G}\rangle,$$
and
$$\mathbf{D}^{ij}_G=a^{ik}\partial_{kl}(\log G)a^{lj}
+\frac{\partial_l(\log G)}{2}(a^{ki}\partial_ka^{lj}+a^{kj}\partial_ka^{li}-
a^{kl}\partial_ka^{ij})+\frac{1}{2}\partial_ta^{ij}.
$$
\end{Lemma}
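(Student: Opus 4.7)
The plan is to treat (\ref{generalCI1}) as the expansion of the cross term $2\int Lv\,(Pv-Lv)\,G\,dx\,dt$, where $P$ is split as $P=L+S$ with $S:=P-L$ behaving as the ``symmetric'' piece with respect to the weighted measure $G\,dx\,dt$ and $L$ as the ``antisymmetric'' piece. Concretely, reading off from the definition,
\[
Sv = Pv-Lv = \tilde{\Delta}v + \langle \mathbf{A}\nabla v,\nabla\log G\rangle - \tfrac{F}{2}\,v,
\]
so that $L$ carries the $\partial_t$ and $S$ carries $\tilde{\Delta}$, each corrected by a $\pm F v/2$ so that the unknown $F$ can be distributed between the two pieces, and with the drift $\mathbf{A}\nabla\log G$ shifting $\tilde\Delta$ by the gradient of the weight. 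Since $v\in C^\infty_0(\mathbb{R}^n\times(0,T))$, all boundary contributions vanish, which is the only analytic input required.

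First I would expand $2\int Lv\cdot Sv\cdot G\,dx\,dt$ term by term. The bilinear expansion produces four families of integrands: a purely temporal cross term $2(\partial_t v)(\tilde\Delta v)G$; a mixed term $2(\partial_t v)\langle\mathbf{A}\nabla v,\nabla\log G\rangle G$; ``potential $\times$ Laplacian'' and ``potential $\times$ drift'' terms of the form $-Fv\,\tilde\Delta v\,G$ and $-Fv\,\langle\mathbf{A}\nabla v,\nabla\log G\rangle G$; and the self-interaction of the $F v/2$ pieces giving $\partial_t v\cdot F v\,G$ and its symmetric counterpart. Integration by parts in $t$ on the first and last groups transfers derivatives onto $G$ and $\mathbf{A}$, producing the factors $\partial_t G$, $\partial_t a^{ij}$, and $\partial_t F$; integration by parts in $x$ on the Laplacian terms transfers derivatives onto $G$ and $\log G$, producing $\tilde\Delta G$, $\partial_{kl}\log G$, and $\partial_k a^{ij}$.

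Next I would collect the output into the three groups on the left of (\ref{generalCI1}):
\begin{enumerate}
\item[\textbf{(a)}] All terms proportional to $v^2 G$ assemble into $\tfrac12\int v^2 M_0 G\,dx\,dt$. The contribution $\partial_t F$ comes from the IBP on $Fv\,\partial_t v\,G$; the term $F\bigl(\tfrac{\partial_t G-\tilde\Delta G}{G}-F\bigr)$ comes from pairing $F$ with the IBP of $\partial_t G$ and $\tilde\Delta G$; $\tilde\Delta F_0$ and $-\langle\mathbf{A}\nabla(F-F_0),\nabla\log G\rangle$ arise precisely because we are free to add and subtract $F_0$ when splitting one of the first-order terms, pushing one copy of $F$ inside a divergence (giving $\tilde\Delta F_0$) and leaving the discrepancy $F-F_0$ as a drift.
\item[\textbf{(b)}] All terms quadratic in $\nabla v$ assemble into $\int\bigl\langle\bigl[2\mathbf{D}_G+(\tfrac{\partial_t G-\tilde\Delta G}{G}-F)\mathbf{A}\bigr]\nabla v,\nabla v\bigr\rangle G$. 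The scalar factor $\tfrac{\partial_t G-\tilde\Delta G}{G}-F$ arises from the IBP in $t$ of the quadratic form $a^{ij}\partial_i v\partial_j v$ against $G$, combined with the $\tilde\Delta G$ term. The tensor $\mathbf{D}_G$ itself comes from three sources: the Hessian of $\log G$ appears from $2\int\langle\mathbf{A}\nabla v,\nabla\log G\rangle\tilde\Delta v\,G$ and yields the first piece $a^{ik}\partial_{kl}(\log G)a^{lj}$; the derivative $\tfrac12\partial_t a^{ij}$ comes from the IBP in $t$; and the trilinear combination $a^{ki}\partial_k a^{lj}+a^{kj}\partial_k a^{li}-a^{kl}\partial_k a^{ij}$ results from the same spatial IBP after symmetrizing the dummy indices $i\leftrightarrow j$ and using the symmetry of $\mathbf{A}$.
\item[\textbf{(c)}] The leftover $v\cdot\nabla v$ cross terms that resist absorption into (a) or (b) reassemble into $-\int v\langle\mathbf{A}\nabla v,\nabla(F-F_0)\rangle G$; this is precisely the residue of the $F_0$ splitting introduced in (a).
\end{enumerate}

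The main obstacle is step \textbf{(b)}: producing $\mathbf{D}_G$ in its exact stated form. The direct IBP gives a combination of $a^{ij}$ and $\partial_k a^{lm}$ with too many free indices, and only after symmetrizing under $i\leftrightarrow j$ (legitimate because it is contracted with the symmetric tensor $\partial_i v\,\partial_j v$) does the Christoffel-type expression $a^{ki}\partial_k a^{lj}+a^{kj}\partial_k a^{li}-a^{kl}\partial_k a^{ij}$ emerge naturally. The rest of the calculation, while lengthy, is mechanical integration by parts with vanishing boundary terms.
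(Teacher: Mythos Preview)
Your outline is correct and is exactly the standard derivation of this identity: conjugate $P$ by the weight, split into symmetric and antisymmetric parts $S$ and $L$, and compute the commutator cross term $2\int Lv\,Sv\,G$ by integration by parts, with the $F_0$ device allowing one additional integration by parts on the zero-order coefficient at the cost of the residual $\nabla(F-F_0)$ term. The only point to note is that the paper does not actually prove this lemma; it merely quotes it from Escauriaza--Fern\'andez \cite{EF} (and \cite[Corollary~3.2]{WZ2}). Your sketch is essentially the proof given in those references, so there is nothing to compare beyond observing that you have reconstructed what the paper outsources.
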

The second one is concerned with the properties of mollified $\{a^{ij}\}$.
\begin{Lemma}\label{Lem-mollify}
Suppose $\{a^{ij}\}$ satisfy (\ref{a1}) and (\ref{a2}). Let
$$a^{ij}_\epsilon(x,t)=\int_{\mathbb{R}^n}a^{ij}(x-y,t)\phi_\epsilon(y)dy,$$
where $\phi$ is a mollifier and $\epsilon=\frac{1}{2}$.
Then $\{a^{ij}_\epsilon\}$ satisfy:
\begin{equation}\label{mollify}
\begin{aligned}
&1)~\lambda|\xi|^2\leq a^{ij}_\epsilon(x,t)\xi_i\xi_j\leq \Lambda|\xi|^2, &&~~\forall \xi\in\mathbb{R}^n;\\
&2)~|\nabla a^{ij}_\epsilon(x,t)|\leq M;&&|\nabla a^{ij}_\epsilon(x,t)|\leq 2E\langle x\rangle^{-\beta}~~~when~|x|\geq1;\\
&3)~|a^{ij}_\epsilon(x,t)-a^{ij}(x,t)|\leq 2\Lambda ;&&|a^{ij}_\epsilon(x,t)-a^{ij}(x,t)|\leq E\langle x\rangle^{-\beta}~~when~|x|\geq1;\\
&4)~|\partial_{kl}a^{ij}_\epsilon(x,t)|\leq c(n)M;&&|\partial_{kl}a^{ij}_\epsilon(x,t)|\leq c(n)E\langle x\rangle^{-\beta}~~~when~|x|\geq1.\\
\end{aligned}
\end{equation}
\end{Lemma}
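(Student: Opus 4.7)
The proof is essentially a routine exercise in the standard theory of mollification, so my plan is to check each of the four claims using the definition $a^{ij}_\epsilon(x,t)=\int a^{ij}(x-y,t)\phi_\epsilon(y)dy$ together with the two key facts: $\phi_\epsilon\ge0$, $\int\phi_\epsilon=1$, and $\operatorname{supp}\phi_\epsilon\subset\{|y|\le 1/2\}$ (since $\epsilon=1/2$). The only real technical point is to exploit the latter to compare $\langle x-y\rangle$ with $\langle x\rangle$ whenever $|x|\ge 1$ and $|y|\le 1/2$: in this regime $|x-y|\ge |x|/2$, so $\langle x-y\rangle\ge \langle x\rangle/2$ and hence $\langle x-y\rangle^{-\beta}\le 2^\beta\langle x\rangle^{-\beta}\le 2\langle x\rangle^{-\beta}$, using $\beta\in[0,1]$. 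I would establish this comparison first as a short preliminary remark, since it is used repeatedly in items 2)--4).

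For item 1), writing $a^{ij}_\epsilon\xi_i\xi_j=\int a^{ij}(x-y,t)\xi_i\xi_j\,\phi_\epsilon(y)dy$ and integrating the pointwise two-sided ellipticity (\ref{a1}) against the nonnegative probability density $\phi_\epsilon$ yields the bound immediately. For item 2), use $\nabla a^{ij}_\epsilon=(\nabla a^{ij})*\phi_\epsilon$; the global bound $|\nabla a^{ij}_\epsilon|\le M$ follows from Jensen applied to the Lipschitz bound in (\ref{a2}). The decay bound comes from $|\nabla a^{ij}_\epsilon(x,t)|\le\int E\langle x-y\rangle^{-\beta}\phi_\epsilon(y)dy$ and the preliminary comparison, giving $\le 2^\beta E\langle x\rangle^{-\beta}\le 2E\langle x\rangle^{-\beta}$ when $|x|\ge1$.

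For item 3), the global bound $|a^{ij}_\epsilon-a^{ij}|\le 2\Lambda$ is immediate from item 1) and (\ref{a1}) applied to $\xi=e_k$ (or more crudely, from $|a^{ij}|\le\Lambda$). For the refined bound with decay, I write
\begin{equation*}
a^{ij}_\epsilon(x,t)-a^{ij}(x,t)=\int_{|y|\le 1/2}\bigl(a^{ij}(x-y,t)-a^{ij}(x,t)\bigr)\phi_\epsilon(y)\,dy,
\end{equation*}
bound the integrand by $\int_0^1|\nabla a^{ij}(x-sy,t)|ds\cdot|y|$, and note that for $|x|\ge1$ and $|sy|\le 1/2$ we again have $\langle x-sy\rangle\ge\langle x\rangle/2$. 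Thus the integrand is at most $2^\beta E\langle x\rangle^{-\beta}|y|$, and since $|y|\le 1/2$ on the support of $\phi_\epsilon$, the constant works out to $\le E\langle x\rangle^{-\beta}$.

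For item 4), I would move one derivative onto the mollifier: $\partial_{kl}a^{ij}_\epsilon=(\partial_k a^{ij})*\partial_l\phi_\epsilon$. Since $\|\partial_l\phi_\epsilon\|_{L^1}\le c(n)/\epsilon=2c(n)$, the global bound $|\partial_{kl}a^{ij}_\epsilon|\le c(n)M$ follows from $|\partial_k a^{ij}|\le M$. The decay bound comes from the same convolution estimate combined with $|\partial_k a^{ij}(x-y,t)|\le E\langle x-y\rangle^{-\beta}\le 2^\beta E\langle x\rangle^{-\beta}$ for $|x|\ge 1$, $|y|\le 1/2$. There is no real obstacle beyond careful bookkeeping of the constants; the one thing to be a little careful about is that the decay bound in (\ref{a2}) is only assumed for the first derivative, which is why transferring one derivative to $\phi_\epsilon$ (rather than using $(\partial_{kl}a^{ij})*\phi_\epsilon$ directly) is the natural route for item 4).
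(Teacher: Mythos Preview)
Your proposal is correct and follows essentially the same approach as the paper: the same comparison $\langle x-y\rangle\ge\tfrac12\langle x\rangle$ for $|x|\ge1$, $|y|\le\tfrac12$, the same mean-value argument for item~3), and the same device of shifting one derivative onto the mollifier for item~4). The only cosmetic difference is that the paper invokes the Lagrange form of the mean value theorem (with some $\theta\in(0,1)$) rather than your integral form, but the estimates and constants are identical.
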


\begin{proof}[Proof of Lemma \ref{Lem-mollify}.]\mbox{}\\
$1)$ Obvious.\\
$2)$
$$|\nabla a^{ij}_\epsilon(x,t)|\leq\int_{\mathbb{R}^n}|\nabla a^{ij}(x-y,t)|\phi_\epsilon(y)dy
\leq M\int_{\mathbb{R}^n}\phi_\epsilon(y)dy=M,$$
and when $|x|\geq1$,
$$
|\nabla a^{ij}_\epsilon(x,t)|\leq\int_{\mathbb{R}^n}|\nabla a^{ij}(x-y,t)|\phi_\epsilon(y)dy
\leq E\int_{\mathbb{R}^n}\langle x-y\rangle^{-\beta}\phi_\epsilon(y)dy.
$$
Since $|x|\geq1$ and $|y|\leq \frac{1}{2}$, then $\langle x-y\rangle\geq\frac{1}{2}\langle x\rangle$ and thus
$$
|\nabla a^{ij}_\epsilon(x,t)|
\leq E2^{\beta}\langle x\rangle^{-\beta}\int_{\mathbb{R}^n}\phi_\epsilon(y)dy
\leq 2E\langle x\rangle^{-\beta}.
$$
$3)$ The first part is obvious. We only need to prove the second
one.
\begin{equation*}
\begin{split}
|a^{ij}_\epsilon(x,t)-a^{ij}(x,t)|&\leq\int_{\mathbb{R}^n}|a^{ij}(x-y,t)-a^{ij}(x,t)|\phi_\epsilon(y)dy\\
&\leq\int_{\mathbb{R}^n}|\nabla a^{ij}(x-\theta y,t)||y|\phi_\epsilon(y)dy, ~~~~~~~(0<\theta<1)\\
\end{split}
\end{equation*}
and when $|x|\geq1$,
$$
|a^{ij}_\epsilon(x,t)-a^{ij}(x,t)|\leq\frac{E}{2}\int_{\mathbb{R}^n}\langle x-\theta y\rangle^{-\beta}\phi_\epsilon(y)dy
\leq E2^{\beta-1}\langle x\rangle^{-\beta}\int_{\mathbb{R}^n}\phi_\epsilon(y)dy
\leq E\langle x\rangle^{-\beta}.
$$
$4)$
\begin{equation*}
\begin{split}
|\partial_{kl}a^{ij}_\epsilon(x,t)|&\leq\int_{\mathbb{R}^n}|
\partial_ka^{ij}(x-y,t)||\partial_l\phi_\epsilon(y)|dy\\
&\leq\epsilon^{-n-1}\int_{\mathbb{R}^n}|\partial_ka^{ij}(x-y,t)||(\partial_l\phi)(\frac{y}{\epsilon})|dy\\
&\leq\frac{M}{\epsilon}\|\partial_l\phi\|_{L^1}\leq 2M\|\nabla\phi\|_{L^1},
\end{split}
\end{equation*}
and when $|x|\geq1$,
\begin{equation*}
\begin{split}
|\partial_{kl}a^{ij}_\epsilon(x,t)|&\leq\epsilon^{-n-1}\int_{\mathbb{R}^n}|
\partial_ka^{ij}(x-y,t)||(\partial_l\phi)(\frac{y}{\epsilon})|dy\\
&\leq\epsilon^{-n-1}E \int_{\mathbb{R}^n}\langle x-y\rangle^{-\beta}|(\partial_l\phi)(\frac{y}{\epsilon})|dy\\
&\leq \frac{E2^\beta}{\epsilon}\langle x\rangle^{-\beta}\|\partial_l\phi\|_{L^1}\leq 4E\langle x\rangle^{-\beta}\|\nabla\phi\|_{L^1}.
\end{split}
\end{equation*}
\end{proof}

Now we begin to prove Proposition \ref{Prop-C1}.

\begin{proof}[Proof of Proposition \ref{Prop-C1}.]
In (\ref{generalCI1}),
we let $$G=e^{2\gamma (t^{-K}-1)-\frac{b\langle x\rangle^\alpha+K}{t}},$$
then
$$\frac{\partial_tG-\tilde{\Delta}G}{G}=\frac{b\langle x\rangle^\alpha-\alpha^2b^2\langle x\rangle^{2\alpha-4}a^{ij}x_ix_j+K}{t^2}
+\frac{\alpha b\langle x\rangle^{\alpha-2}(a^{ii}+\partial_ka^{kl}x_l)}{t}-2\gamma Kt^{-K-1}.$$ Let
$$F=\frac{b\langle x\rangle^\alpha-\alpha^2b^2\langle x\rangle^{2\alpha-4}a^{ij}x_ix_j+K}{t^2}
+\frac{\alpha b\langle x\rangle^{\alpha-2}a^{ii}-d}{t}-2\gamma Kt^{-K-1},$$ where $d$ is a positive constant to be
determined, and
$$F_0=\frac{b\langle x\rangle^\alpha-\alpha^2b^2\langle x\rangle^{2\alpha-4}a^{ij}_\epsilon x_ix_j+K}{t^2}
+\frac{\alpha b\langle x\rangle^{\alpha-2}a^{ii}_\epsilon-d}{t}-2\gamma Kt^{-K-1}.$$

We denote by $\mathbf{I}_n$ the identity matrix of $\mathbb{R}^n$, $C$ are
generic constants depending on $n,\Lambda,\lambda,M,E$ and $\alpha$ in the
following arguments. We need some estimates which we list in the
following lemma.
\begin{Lemma}\label{estimates1}
Set $b=\frac{1}{8\Lambda}$ and $d=\frac{K}{4}$. For
$K\geq K_0(n,\Lambda,\lambda,M,E,\alpha)$, we have
\begin{align}
2\mathbf{D}_G+(\frac{\partial_t G-\tilde{\Delta}G}{G}-F)\mathbf{A}\geq &\frac{\lambda K}{8t}\mathbf{I}_n;\label{E1}\\
\partial_tF+F(\frac{\partial_tG-\tilde{\Delta}G}{G}-F)\geq &\frac{bK\langle x\rangle^{\alpha}}{16 t^3};\label{E2}\\
|\tilde{\triangle}F_0|\leq&\frac{C\langle x\rangle^{\alpha}}{t^2};\label{E3}\\
|\nabla(F-F_0)|\leq&\frac{C\langle x\rangle^{\alpha-1}}{t^2}.\label{E4}
\end{align}
\end{Lemma}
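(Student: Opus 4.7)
The strategy is to verify the four estimates in turn, exploiting a cancellation engineered into the definitions of $F$ and $F_0$. The starting point is to compute
$$\frac{\partial_tG-\tilde{\Delta}G}{G}-F=\frac{d+\alpha b\langle x\rangle^{\alpha-2}\partial_k a^{kl}x_l}{t}.$$
The second term is $O(\langle x\rangle^{\alpha-1-\beta}/t)$, and since $\beta=\max\{0,\alpha-1\}$ makes the exponent non-positive, it is bounded by $C/t$. With $d=K/4$ this gives $\frac{\partial_tG-\tilde{\Delta}G}{G}-F\geq \frac{K}{8t}$ once $K\geq K_0$. This identity is the engine of the whole lemma: it produces the dominant positive contribution in both (E1) and (E2).

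For (E1), I would expand $\mathbf{D}_G$ piece by piece. Since $\partial_{kl}\log G=-\frac{b}{t}\partial_{kl}\langle x\rangle^\alpha$ is of size $\langle x\rangle^{\alpha-2}/t$, the term $a^{ik}\partial_{kl}(\log G)a^{lj}$ has operator norm $O(1/t)$. The middle piece is bounded by $|\partial_l\log G|\,|\nabla\mathbf{A}|\leq \frac{Cb}{t}\langle x\rangle^{\alpha-1}\cdot E\langle x\rangle^{-\beta}=O(1/t)$. The term $\frac{1}{2}\partial_t a^{ij}$ is $O(1)$. Hence $\|\mathbf{D}_G\|\leq C/t$, and combining with $(\frac{\partial_tG-\tilde{\Delta}G}{G}-F)\mathbf{A}\geq \frac{\lambda K}{8t}\mathbf{I}_n$ yields (E1) for $K$ large enough.

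For (E2), I would compute $\partial_t F+F\cdot(\frac{\partial_tG-\tilde{\Delta}G}{G}-F)$ term by term. The product produces a leading positive contribution $\gtrsim \frac{bK\langle x\rangle^\alpha}{4t^3}+\frac{K^2}{4t^3}$; here one uses that $b\langle x\rangle^\alpha-\alpha^2b^2\langle x\rangle^{2\alpha-4}a^{ij}x_ix_j\geq \frac{b}{2}\langle x\rangle^\alpha$ for large $\langle x\rangle$, which holds thanks to the choice $b=1/(8\Lambda)$ (sharp at $\alpha=2$). Meanwhile $\partial_t F$ contributes chiefly $-2(b\langle x\rangle^\alpha+K)/t^3$ together with $O(\langle x\rangle^{2\alpha-2}/t^2)$ from $\partial_t a^{ij}$. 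Choosing $K$ large absorbs all negative contributions and yields the target $\frac{bK\langle x\rangle^\alpha}{16t^3}$.

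Estimates (E3) and (E4) are direct applications of Lemma \ref{Lem-mollify}. For (E3) one distributes $\tilde{\Delta}$ across $F_0$ and uses part (4) of that lemma to balance the algebraic growth of $\langle x\rangle^{2\alpha-4}x_ix_j$ against $|\partial^2 a^{ij}_\epsilon|\leq CE\langle x\rangle^{-\beta}$ for $|x|\geq 1$. For (E4) one writes $F-F_0$ explicitly in terms of $a^{ij}-a^{ij}_\epsilon$ and applies parts (2)--(3). The main obstacle is the bookkeeping in (E2): several terms of varying $\langle x\rangle$-growth and $K$-weights must be tracked simultaneously, and one must verify that the engineered cancellation really produces a leading positive term strong enough to beat the losses from $\partial_tF$. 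The use of a separate mollified function $F_0$ is dictated by the merely Lipschitz regularity of $a^{ij}$: $\tilde{\Delta}F$ itself is not well-defined, so $F_0$ stands in for it, and (E4) is precisely what controls the resulting mismatch.
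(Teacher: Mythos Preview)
Your proposal is correct and follows essentially the same route as the paper: the identity $\frac{\partial_tG-\tilde\Delta G}{G}-F=(d+\alpha b\langle x\rangle^{\alpha-2}\partial_ka^{kl}x_l)/t$ is indeed the engine for (\ref{E1}) and (\ref{E2}), and Lemma~\ref{Lem-mollify} handles (\ref{E3})--(\ref{E4}) just as you describe. Two small points the paper makes explicit that you should not overlook in a full write-up: in (\ref{E2}) one must also verify that the $\gamma$-dependent contribution $2\gamma Kt^{-K-2}\big[K+1-d-\alpha b\langle x\rangle^{\alpha-2}\partial_ia^{ij}x_j\big]$ is nonnegative (it is, once $d=K/4$ and $K$ is large), and for (\ref{E3}) mere boundedness of $\partial^2 a^{ij}_\epsilon$ already gives $|\tilde\Delta F_0|\leq C\langle x\rangle^{2\alpha-2}/t^2\leq C\langle x\rangle^{\alpha}/t^2$---the $\langle x\rangle^{-\beta}$ decay of derivatives is only genuinely needed in (\ref{E4}).
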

We will prove this lemma later.

First by (\ref{E1}) we have
\begin{equation}\label{pc1.1}
\begin{split}
&\int_{\mathbb{R}^n\times(0,1)}\langle [2\mathbf{D}_G+(\frac{\partial_tG-
\tilde{\Delta}G}{G}-F)\mathbf{A}]\nabla v,\nabla v\rangle Gdxdt\\
\geq&\frac{\lambda K}{8}\int_{\mathbb{R}^n\times(0,1)}\frac{|\nabla v|^2}{t}Gdxdt.
\end{split}
\end{equation}
Next we estimate $M_0$. By (\ref{E4}) and
$$\nabla \log G=-\frac{\alpha b}{t}\langle x\rangle^{\alpha-2}x$$ we have
\begin{equation}\label{pc1.5}
|\langle \mathbf{A}\nabla(F-F_0),\nabla \log G\rangle|\leq\Lambda|\nabla(F-F_0)||\nabla \log G|\leq
\frac{C\langle x\rangle^{2\alpha-2}}{t^3}\leq\frac{C\langle x\rangle^{\alpha}}{t^3} .
\end{equation}
Then by (\ref{E2}), (\ref{E3}) and (\ref{pc1.5}) we have
\begin{eqnarray*}
\begin{split}
M_0&=\partial_tF+F(\frac{\partial_tG-\tilde{\Delta}G}{G}-F)+\tilde{\Delta}F_0-
\langle A\nabla (F-F_0),\nabla{\log G}\rangle\\
&\geq (\frac{bK}{16}-C)\frac{\langle x\rangle^{\alpha}}{t^3},
\end{split}
\end{eqnarray*}
thus
\begin{equation}\label{pc1.6}
\frac{1}{2}\int_{\mathbb{R}^n\times(0,1)}v^2M_0Gdxdt
\geq (\frac{bK}{32}-C)\int_{\mathbb{R}^n\times(0,1)}\frac{\langle x\rangle^{\alpha}}{t^3}v^2Gdxdt.
\end{equation}
By the Cauchy inequality and (\ref{E4}) we have
\begin{equation}\label{pc1.7}
\begin{split}
&\Big|\int_{\mathbb{R}^n\times(0,1)}v\langle \mathbf{A}\nabla v,\nabla (F-F_0)\rangle Gdxdt\Big|\\
\leq& \Lambda\int_{\mathbb{R}^n\times(0,1)}|\nabla(F-F_0)||v||\nabla v|Gdxdt\\
\leq& C\int_{\mathbb{R}^n\times(0,1)}\frac{\langle x\rangle^{\alpha-1}}{t^2}|v||\nabla v|Gdxdt\\
\leq& C\int_{\mathbb{R}^n\times(0,1)}\frac{\langle x\rangle^{2\alpha-2}}{t^3}v^2Gdxdt
+C\int_{\mathbb{R}^n\times(0,1)}\frac{|\nabla v|^2}{t}Gdxdt\\
\leq& C\int_{\mathbb{R}^n\times(0,1)}\frac{\langle x\rangle^\alpha}{t^3}v^2Gdxdt
+C\int_{\mathbb{R}^n\times(0,1)}\frac{|\nabla v|^2}{t}Gdxdt.
\end{split}
\end{equation}
Finally, by (\ref{generalCI1}), (\ref{pc1.1}), (\ref{pc1.6}), (\ref{pc1.7}) and the Cauchy inequality, we have
$$
\int_{\mathbb{R}^n\times(0,1)}|Pu|^2Gdxdt
\geq (\frac{bK}{32}-C)\int_{\mathbb{R}^n\times(0,1)}\frac{\langle x\rangle^{\alpha}}{t^3} v^2Gdxdt+(\frac{\lambda K}{8}-C)\int_{\mathbb{R}^n\times(0,1)}\frac{|\nabla v|^2}{t}Gdxdt,\\
$$
if we choose $K\geq K_0(n,\Lambda,\lambda,M,E,\alpha)$ large enough, we obtain
$$
\int_{\mathbb{R}^n\times(0,1)}|Pv|^2Gdxdt
\geq \int_{\mathbb{R}^n\times(0,1)}(v^2+|\nabla v|^2)Gdxdt,
$$

Thus we proved Carleman inequality (\ref{CI1}).
\end{proof}

There is only Lemma \ref{estimates1} left to be proven.

\begin{proof}[Proof of Lemma \ref{estimates1}.] We estimate them one by one.\\

\textbf{Estimate of $2\mathbf{D}_G+(\frac{\partial_t G-\tilde{\Delta}G}{G}-F)\mathbf{A}$.}\\

By direct calculations we have
\begin{eqnarray*}
\begin{split}
&2\mathbf{D}_G+(\frac{\partial_t G-\tilde{\Delta}G}{G}-F)\mathbf{A}\\
=&-\frac{2\alpha b}{t}\langle x\rangle^{\alpha-2}\mathbf{A}^2+\frac{2\alpha(2-\alpha)b}{t}\langle x\rangle^{\alpha-4}\mathbf{A}x(\mathbf{A}x)^{'}\\
&-\frac{\alpha b}{t}\langle x\rangle^{\alpha-2}x_l(a^{ki}\partial_ka^{lj}+a^{kj}\partial_ka^{li}-a^{kl}
\partial_ka^{ij}-a^{ij}\partial_k a^{kl})+\partial_ta^{ij}+\frac{d}{t}\mathbf{A}\\
\geq&-\frac{2\alpha b\Lambda^2}{t}\langle x\rangle^{\alpha-2}\mathbf{I}_n-\frac{\alpha b}{t}\langle x\rangle^{\alpha-2}x_l(a^{ki}\partial_ka^{lj}+a^{kj}
\partial_ka^{li}-a^{kl}\partial_ka^{ij}-a^{ij}\partial_k a^{kl})+\partial_ta^{ij}+ \frac{\lambda d}{t}\mathbf{I}_n.
\end{split}
\end{eqnarray*}
Next we estimate the lower bounds of the matrices in the right side of the above inequality.
We just need to estimate matrix $x_la^{ki}\partial_ka^{lj}$ and
$\partial_ta^{ij}$. For any $\xi\in\mathbb{R}^n$,
$$|x_la^{ki}\partial_ka^{lj}\xi_i\xi_j|\leq n^2\Lambda E|x|\langle x\rangle^{-\beta}\sum_{i,j}|\xi_i||\xi_j|\leq
n^3\Lambda E\langle x\rangle^{1-\beta}|\xi|^2,$$
then
$$- n^3\Lambda E\langle x\rangle^{1-\beta}\mathbf{I}_n\leq x_la^{ki}\partial_ka^{lj}\leq n^3\Lambda E\langle x\rangle^{1-\beta}\mathbf{I}_n.$$
Similarly,
$$|\partial_ta^{ij}\xi_i\xi_j|\leq M\sum_{i,j}|\xi_i||\xi_j|\leq Mn|\xi|^2,$$
then
$$-Mn\mathbf{I}_n\leq\partial_ta^{ij}\leq Mn\mathbf{I}_n.$$
Thus we have
\begin{eqnarray*}
2\mathbf{D}_G+(\frac{\partial_t G-\tilde{\Delta}G}{G}-F)\mathbf{A}
\geq\big(-\frac{2\alpha b\Lambda^2}{t}\langle x\rangle^{\alpha-2}-\frac{4\alpha b n^3\Lambda E}{t}\langle x\rangle^{\alpha-\beta-1}-Mn+\frac{\lambda d}{t}\big)\mathbf{I}_n.
\end{eqnarray*}
Notice that $\alpha-2\leq0$ and $\alpha-\beta-1\leq0$, and if we choose $d=d(n,\Lambda,\lambda,M,E,\alpha)$ large enough, then
$$2\mathbf{D}_G+(\frac{\partial_t G-\tilde{\Delta}G}{G}-F)\mathbf{A}\geq\frac{\lambda d}{2t}\mathbf{I}_n.$$

\textbf{Estimate of $\partial_tF+F(\frac{\partial_tG-\tilde{\Delta}G}{G}-F)$.}\\

By direct calculations we have
\begin{eqnarray*}
\begin{split}
&\partial_tF+F(\frac{\partial_tG-\tilde{\Delta}G}{G}-F)\\
=&\frac{(d+\alpha b\langle x\rangle^{\alpha-2}\partial_ia^{ij}x_j-2)(b\langle x\rangle^{\alpha}-\alpha^2b^2\langle x\rangle^{2\alpha-4}a^{ij}x_ix_j+K)}{t^3}\\
&-\frac{\alpha^2b^2 \langle x\rangle^{2\alpha-4}\partial_ta^{ij}x_ix_j+(d-\alpha b\langle x\rangle^{\alpha-2}a^{ii})(d+\alpha b\langle x\rangle^{\alpha-2}\partial_ia^{ij}x_j-1)}{t^2}\\
&+\frac{\alpha b\langle x\rangle^{\alpha-2}\partial_ta^{ii}}{t}+2\gamma Kt^{-K-2}[K+1-(d+\alpha b\langle x\rangle^{\alpha-2}\partial_ia^{ij}x_j)].\\
\end{split}
\end{eqnarray*}
Notice that
\begin{equation*}
\begin{split}
\langle x\rangle^{\alpha-2}|\partial_ia^{ij}x_j|\leq& C\langle x\rangle^{\alpha-\beta-2}|x|\leq C\langle x\rangle^{\alpha-\beta-1}\leq C,\\
\langle x\rangle^{2\alpha-4}a^{ij}x_ix_j\leq&\Lambda\langle x\rangle^{2\alpha-4}|x|^2\leq\Lambda\langle x\rangle^{2\alpha-2}\leq\Lambda\langle x\rangle^{\alpha},\\
\langle x\rangle^{2\alpha-4}|\partial_ta^{ij}x_ix_j|\leq& C\langle x\rangle^{2\alpha-4}|x|^2\leq C\langle x\rangle^{\alpha},
\end{split}
\end{equation*}
then we have
\begin{eqnarray*}
\begin{split}
&\partial_tF+F(\frac{\partial_tG-\tilde{\Delta}G}{G}-F)\\
\geq& \frac{(d-C)[(b-\alpha^2b^2\Lambda)\langle x\rangle^{\alpha}+K]}{t^3}-\frac{C\langle x\rangle^{\alpha}+(d+C)^2}{t^2}\\
&-\frac{C}{t}+2\gamma Kt^{-K-2}(K-d-C).
\end{split}
\end{eqnarray*}
Recall that $b=\frac{1}{8\Lambda}$, and thus $\alpha^2b^2\Lambda\leq4b^2\Lambda\leq\frac{b}{2}$. If we choose $d$ large enough,
then
\begin{eqnarray*}
\begin{split}
&\partial_tF+F(\frac{\partial_tG-\tilde{\Delta}G}{G}-F)\\
\geq& \frac{(d-C)[\frac{b}{2}\langle x\rangle^{\alpha}+K]-C\langle x\rangle^{\alpha}-(d+C)^2-C}{t^3}+2\gamma Kt^{-K-2}(K-2d)\\
\geq& \frac{(\frac{bd}{2}-C)\langle x\rangle^{\alpha}+(d-C)K-2d^2}{t^3}+2\gamma Kt^{-K-2}(K-2d).
\end{split}
\end{eqnarray*}
We choose $d=\frac{K}{4}$, then
$$\partial_tF+F(\frac{\partial_tG-\tilde{\Delta}G}{G}-F)\geq (\frac{bK}{8}-C)\frac{\langle x\rangle^{\alpha}}{t^3}+\gamma K^2t^{-K-2}\geq \frac{bK\langle x\rangle^{\alpha}}{16 t^3}.$$

\textbf{Estimate of $\tilde{\triangle}F_0$.}\\

Direct calculations show that
\begin{equation}\label{computeF_0}
\begin{split}
\tilde{\triangle}F_0=&\frac{b}{t^2}\tilde{\triangle}(\langle x\rangle^{\alpha})
-\frac{\alpha^2b^2}{t^2}\tilde{\triangle}(\langle x\rangle^{2\alpha-4}a^{ij}_\epsilon x_ix_j)
+\frac{\alpha b}{t}\tilde{\triangle}(\langle x\rangle^{\alpha-2}a^{ii}_\epsilon),
\end{split}
\end{equation}
and
$$
\tilde{\triangle}(\langle x\rangle^{\alpha})=\alpha\langle x\rangle^{\alpha-2}(a^{ii}+\partial_ia^{ij}x_j)
+\alpha(\alpha-2)\langle x\rangle^{\alpha-4}a^{ij}x_ix_j,
$$
\begin{equation*}
\begin{split}
&~\tilde{\triangle}(\langle x\rangle^{2\alpha-4}a^{ij}_\epsilon x_ix_j)=(2\alpha-4)(2\alpha-6)\langle x\rangle^{2\alpha-8}a^{kl}a^{ij}_\epsilon x_ix_jx_kx_l\\
&~~+(2\alpha-4)\langle x\rangle^{2\alpha-6}[(\partial_la^{kl}a^{ij}_\epsilon+2a^{kl}\partial_la^{ij}_\epsilon)x_ix_jx_k
+(4a^{ki}a^{kj}_\epsilon+a^{kk}a^{ij}_\epsilon)x_ix_j]\\
&~~+\langle x\rangle^{2\alpha-4}[(a^{kl}\partial_{kl}a^{ij}_\epsilon+\partial_ka^{kl}\partial_{l}a^{ij}_\epsilon)x_ix_j
+(2\partial_ka^{kj}a^{ij}_\epsilon+4a^{kj}\partial_ka^{ij}_\epsilon)x_i+2a^{ij}a^{ij}_\epsilon],
\end{split}
\end{equation*}
\begin{equation*}
\begin{split}
\tilde{\triangle}(\langle x\rangle^{\alpha-2}a^{ii}_\epsilon)
=&(\alpha-2)(\alpha-4)\langle x\rangle^{\alpha-6}a^{ij}a^{kk}_\epsilon x_ix_j\\
&+(\alpha-2)\langle x\rangle^{\alpha-4}
[(\partial_ja^{ij}a^{kk}_\epsilon+2a^{ij}\partial_ja^{kk}_\epsilon)x_i+a^{ii}a^{kk}_\epsilon]\\
&+\langle x\rangle^{\alpha-2}(a^{ij}\partial_{ij}a^{kk}_\epsilon+\partial_ia^{ij}\partial_ja^{kk}_\epsilon).
\end{split}
\end{equation*}
By Lemma \ref{Lem-mollify} we know that $a^{ij},\nabla a^{ij},a^{ij}_\epsilon,\nabla a^{ij}_\epsilon$ and $\nabla^2a^{ij}_\epsilon$ are all bounded,
then it is easy to verify that
\begin{equation}\label{3parts}
\begin{split}
|\tilde{\triangle}(\langle x\rangle^{\alpha})|\leq& C(\langle x\rangle^{\alpha-1}+\langle x\rangle^{\alpha-2})
\leq C\langle x\rangle^{\alpha-1};\\
|\tilde{\triangle}(\langle x\rangle^{\alpha-4}a^{ij}_\epsilon x_ix_j)|\leq& C(\langle x\rangle^{2\alpha-4}+\langle x\rangle^{2\alpha-3}+\langle x\rangle^{2\alpha-2})\leq C\langle x\rangle^{2\alpha-2};\\
|\tilde{\triangle}(\langle x\rangle^{\alpha-2}a^{ii}_\epsilon)|\leq& C(\langle x\rangle^{\alpha-4}+\langle x\rangle^{\alpha-3}+\langle x\rangle^{\alpha-2})\leq C\langle x\rangle^{\alpha-2}.
\end{split}
\end{equation}
Finally by (\ref{computeF_0}) and (\ref{3parts}) we have
$$
|\tilde{\triangle}F_0|\leq\frac{C}{t^2}(\langle x\rangle^{\alpha-1}+\langle x\rangle^{2\alpha-2}+\langle x\rangle^{\alpha-2})\leq\frac{C\langle x\rangle^{\alpha}}{t^2}.
$$

\textbf{Estimate of $|\nabla(F-F_0)|$.}\\

Since
$$F-F_0=\frac{\alpha^2b^2}{t^2}\langle x\rangle^{2\alpha-4}(a^{ij}_\epsilon-a^{ij})x_ix_j
-\frac{\alpha b}{t}\langle x\rangle^{\alpha-2}(a^{ii}_\epsilon-a^{ii}),$$
then
\begin{equation*}
\begin{split}
\nabla(F-F_0)=&\frac{\alpha^2b^2}{t^2}[(2\alpha-4)\langle x\rangle^{2\alpha-6}(a^{ij}_\epsilon-a^{ij})x_ix_jx
+2\langle x\rangle^{2\alpha-4}(a^{ij}_\epsilon-a^{ij})x_i\nabla x_j\\
&~~~~~~~~~+\langle x\rangle^{2\alpha-4}(\nabla a^{ij}_\epsilon-\nabla a^{ij})x_ix_j]\\
&-\frac{\alpha b}{t}[(\alpha-2)\langle x\rangle^{\alpha-4}(a^{ii}_\epsilon-a^{ii})x
+\langle x\rangle^{\alpha-2}(\nabla a^{ii}_\epsilon-\nabla a^{ii})].
\end{split}
\end{equation*}
Notice that $a^{ij},\nabla a^{ij},a^{ij}_\epsilon$ and $\nabla a^{ij}_\epsilon$ are all bounded, then
\begin{equation}\label{com-nab1}
|\nabla(F-F_0)|\leq\frac{C}{t^2}(\langle x\rangle^{2\alpha-3}
+\langle x\rangle^{2\alpha-4}|\nabla a^{ij}_\epsilon-\nabla a^{ij}||x|^2)+\frac{C}{t}(\langle x\rangle^{\alpha-3}+\langle x\rangle^{\alpha-2}).
\end{equation}
By $2)$ of (\ref{mollify}), when $|x|<1$,
$$|\nabla a^{ij}_\epsilon-\nabla a^{ij}| |x|^2\leq 2M|x|^2\leq 2M,$$
and when $|x|\geq1$,
$$|\nabla a^{ij}_\epsilon-\nabla a^{ij}| |x|^2\leq (2E\langle x\rangle^{-\beta}+E\langle x\rangle^{-\beta})|x|^2
\leq 3E\langle x\rangle^{2-\beta}.
$$
In both cases we have
\begin{equation}\label{com-nab2}
|\nabla a^{ij}_\epsilon-\nabla a^{ij}| |x|^2\leq C\langle x\rangle^{2-\beta}.
\end{equation}
By (\ref{com-nab1}) and (\ref{com-nab2}) we have
$$
|\nabla(F-F_0)|\leq\frac{C}{t^2}(\langle x\rangle^{2\alpha-3}
+\langle x\rangle^{2\alpha-\beta-2})+\frac{C\langle x\rangle^{\alpha-2}}{t}
$$
Since $2\alpha-\beta-2\leq \alpha-1$, then
$$
|\nabla(F-F_0)|\leq\frac{C}{t^2}(\langle x\rangle^{2\alpha-3}
+\langle x\rangle^{\alpha-1})+\frac{C\langle x\rangle^{\alpha-2}}{t}\leq\frac{C\langle x\rangle^{\alpha-1}}{t^2}.
$$

Thus we proved Lemma \ref{estimates1}.
\end{proof}

\bigskip

\noindent {\bf Acknowledgments.}
J. Wu is supported by NSFC under grant 11601373.
L. Zhang is partially supported by NSFC under grant 11471320 and 11631008.

\vspace{1cm}

{\small}


\begin{thebibliography}{WWW}

\bibitem{Ty} A. Tychonoff: {\it Th\'{e}or\`{e}mes d'unicit\'{e} pour l'\'{e}quation de la chaleur}, Mat. Sb. 42 (1935), 199-216.


\bibitem{ESS} L. Escauriaza, G. A. Seregin, V. \v{S}ver\'{a}k, {\it $L_{3,\infty}$ solutions to the Navier-Stokes equations and backward uniqueness}, Russ. Math. Surv., 58 (2003), no. 2, 211-250.



\bibitem{ESS1} L. Escauriaza, G. A. Seregin, V. \v{S}ver\'{a}k, {\it Backward uniqueness for parabolic equations}, Arch.
Ration. Mech. Anal., 169 (2003), no. 2, 147-157.


\bibitem{ESS2} L. Escauriaza, G. A. Seregin, V. \v{S}ver\'{a}k, {\it Backward uniqueness for the heat operator in half-space}, Algebra i Analiz 15 (2003), no. 1, 201-214; translation in St. Petersburg Math. J. 15 (2004), no. 1, 139-148.

\bibitem{EF} L. Escauriaza, F. J. Fern\'{a}ndez,
{\it Unique continuation for parabolic operators}, Ark. Mat., 41(2003), no. 1, 35-60.

\bibitem{LS} Lu Li, V. \v{S}ver\'{a}k,
{\it Backward uniqueness for the heat equation in cones},  Comm. Partial Differential Equations,  37 (2012),  no. 8, 1414-1429.

\bibitem{WW} J. Wu, W. Wang, {\it On backward uniqueness for the heat operator in cones}, J. Differential Equations, 258 (2015), 224-241.

\bibitem{R¨¹} R\"{u}land, Angkana, {\it On the backward uniqueness property for the heat equation in two-dimensional conical domains}, Manuscripta Math. 147 (2015), no. 3-4, 415-436.

\bibitem{Ngu} Tu A. Nguyen, {\it On a question of Landis and Oleinik}, Tran. Amer. Math. Soc., 362 (2010), no. 6, 2875-2899.


\bibitem{WZ1} J. Wu, L. Zhang, {\it Backward uniqueness for parabolic operators with variable coefficients in a half space}, Commun. Contemp. Math., 18 (2016), no. 1, 1550011, 38 pp.


\bibitem{WZ2} J. Wu, L. Zhang, {\it The Landis-Oleinik conjecture in the exterior domain}, Adv. Math., 302 (2016), 190-230.

\bibitem{LM} J.-L. Lions, B. Malgrange, {\it Sur l'unicit\'{e} r\'{e}trograde dans les probl\`{e}mes mixtes paraboliques}, Math. Scand. 8 (1960), 277-286.

\bibitem{SP} D. Del Santo, M. Prizzi, {\it Backward uniqueness for parabolic operators whose coefficients are non-Lipschitz continuous in time}, J. Math. Pures Appl. 84 (2005), 471-491.

\bibitem{SJP} D. Del Santo, C. J\"{A}h, M. Paicu,  {\it Backward uniqueness for parabolic operators with non-Lipschitz coefficients}, Osaka J. Math. 52 (2015), no. 3, 793-815.


\end{thebibliography}
\end{document}